\crefname{equation}{}{}
\crefname{chapter}{Chapter}{Chapters}
\crefname{section}{Section}{Sections}
\crefname{subsection}{Section}{Sections}
\crefname{subsubsection}{Section}{Sections}
\crefname{figure}{Figure}{Figures}
\crefname{table}{Table}{Tables}
\crefname{algorithm}{Algorithm}{Algorithms}
\numberwithin{equation}{section}
\theoremstyle{definition}
\crefname{question}{Question}{Questions}
\newtheorem{definition}{Definition}[section]
\crefname{def}{Definition}{Definitions}
\crefname{ex}{Example}{Examples}
\theoremstyle{plain}
\newtheorem{thm}[definition]{Theorem}
\crefname{thm}{Theorem}{Theorems}
\newtheorem{conj}{Conjecture}[section]
\crefname{conj}{Conjecture}{Conjectures}
\newtheorem{lemma}[definition]{Lemma}
\crefname{lemma}{Lemma}{Lemmas}
\crefname{cor}{Corollary}{Corollaries}
\crefname{prop}{Proposition}{Propositions}
\crefname{obs}{Observation}{Observations}
\crefname{claim}{Claim}{Claims}
\theoremstyle{remark}
\crefname{rmk}{Remark}{Remarks}
\newcommand{\setbuilder}[2]{\left\{#1\ \colon #2\right\}}
\newcommand{\N}{\mathbb{N}}
\newcommand{\cE}{\mathcal{E}}
\newcommand{\cH}{\mathcal{H}}
\newcommand{\cK}{\mathcal{K}}
\newcommand{\card}[1]{\left|#1 \right|}
\renewcommand{\phi}{\varphi}
\renewcommand{\subset}{\subseteq}
\title{\LARGE Partitioning infinite hypergraphs into few monochromatic Berge-paths}
\author{Sebasti\'an Bustamante \and Jan Corsten\thanks{Department of Mathematics, London School of Economics, Houghton St, WC2A 2AE London, Email: \href{mailto:j.corsten@lse.ac.uk}{j.corsten@lse.ac.uk}, \href{mailto:n.frankl@lse.ac.uk}{n.frankl@lse.ac.uk}.} \and N\'ora Frankl\footnotemark[1]}
\begin{document}
\maketitle
\begin{abstract}
Extending a result of Rado to hypergraphs, we prove that for all $s, k, t \in \mathbb{N}$ with $k \geq t \geq 2$, the vertices of every $r = s(k-t+1)$-edge-coloured countably infinite complete $k$-graph can be partitioned into the cores of at most $s$ monochromatic $t$-tight Berge-paths of different colours. We further describe a construction showing that this result is best possible.
\end{abstract}

\section{Introduction}
Lehel's conjecture (first seen in \cite{Aye79}) states that the vertices of every $2$-edge-coloured complete graph can be partitioned into two monochromatic cycles, one of each colour. Here, single vertices and edges are considered to be cycles and this convention is used throughout this paper.
The conjecture was proved for very large graphs by \L uczak, R\"odl and Szemer\'edi \cite{LRS98} in 1998, for large graphs by Allen \cite{All08} in 2008, and for all graphs by Bessy and Thomass\'e \cite{BT10} in 2010.

Erdős, Gyárfás and Pyber \cite{EGP91} conjectured in 1991 that this can be extended to $r$ colours (allowing $r$ monochromatic cycles). 
This was however disproved by Pokrovskiy \cite{Pok14}, who showed that for every $r \geq 3$, there are infinitely many $r$-edge-coloured complete graphs whose vertices cannot be covered by $r$ monochromatic vertex-disjoint cycles.
Finding the minimum number of monochromatic vertex-disjoint cycles needed to cover the vertices of any $r$-edge-coloured complete graph remains a big open problem. We note that a priori it is not obvious that this minimum is independent of the size of the complete graph we wish to cover. The fact that this is the case was proved by Erdős, Gyárfás and Pyber \cite{EGP91}, who also presented a simple construction in which $r$ cycles are needed. The currently best-known upper bound is $100r\log r$, due to Gy\'arf\'as, Ruszink\'o, S\'ark\"ozy and Szemer\'edi \cite{GRS+06}.

An infinite analogue of the conjecture of Erd\H os, Gy\'arf\'as and Pyber is true as proved by Rado \cite{Rad78} already in 1978.

\begin{thm}[\cite{Rad78}]\label{thm:rado}
The vertices of every countably infinite $r$-edge-coloured complete graph can be partitioned into $r$ monochromatic paths (infinite or finite), one of each colour.
\end{thm}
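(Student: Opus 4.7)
The plan is to build the required paths $P_1,\ldots,P_r$ by a greedy construction that processes one vertex at a time. Enumerate $V=\{v_1,v_2,\ldots\}$ and, at each stage $n$, maintain finite, vertex-disjoint, monochromatic paths $P_1^n,\ldots,P_r^n$ (of colours $1,\ldots,r$) whose union is $\{v_1,\ldots,v_n\}$, together with a designated endpoint $e_i^n$ of each non-empty $P_i^n$. At stage $n+1$ I will process $v:=v_{n+1}$, absorb it into the configuration, and in the end take $P_i:=\bigcup_n P_i^n$; since $v_n$ is processed by stage $n$, this produces the desired partition.

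The heart of the proof is the extension step at each stage. First I would inspect the colours $c_i:=c(v e_i^n)$ of the $r$ edges from $v$ to the current endpoints. If $c_i=i$ for some $i$ (the \emph{easy case}), simply append $v$ to $P_i^n$. Otherwise the map $i\mapsto c_i$ is a fixed-point-free function on $\{1,\ldots,r\}$ (the \emph{bad case}). For $r=2$ the bad case has a quick fix: look at the colour of $e_1^n e_2^n$; if it is colour~$1$ then append $e_2^n$ followed by $v$ to $P_1^n$ (both edges used are colour~$1$ in the bad case) and simultaneously shorten $P_2^n$ by removing its endpoint $e_2^n$, which is harmless since the rest of $P_2^n$ is still a valid monochromatic path of colour~$2$; the subcase of $e_1^n e_2^n$ having colour~$2$ is symmetric.

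The hard part will be handling the bad case for general~$r$. The natural attempt is to look for a short monochromatic path of some colour~$c$ that starts at $v$, ends at $e_c^n$, and stays entirely inside $\{v,e_1^n,\ldots,e_r^n\}$; attaching such a path to $P_c^n$ would absorb $v$ at the cost of pulling some of the $e_i^n$ into $P_c^n$ and correspondingly shortening the donor paths. Unfortunately this is not always possible: for $r\geq 3$ one can colour the edges of the auxiliary $K_{r+1}$ on $\{v,e_1^n,\ldots,e_r^n\}$ so that for every colour~$c$ the vertices $v$ and $e_c^n$ lie in different $c$-monochromatic components. The remedy I would pursue is to broaden the search, allowing the attaching path to use a few auxiliary vertices from the infinite reservoir $V\setminus\bigcup_i V(P_i^n)$ and to select these vertices via an infinite pigeonhole on their colour-types to $(e_1^n,\ldots,e_r^n)$. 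Establishing that a suitable attaching path always exists, while maintaining the invariants needed for all future stages, is where the real work of the argument lies.
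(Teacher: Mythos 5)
Your stage-by-stage absorption scheme is fine for $r=2$, but the entire content of the theorem for $r\geq 3$ is concentrated in the step you explicitly leave open, so as it stands this is not a proof. You correctly observe that the naive fix fails: the auxiliary complete graph on $\{v,e_1^n,\dots,e_r^n\}$ can be coloured so that no colour $c$ admits a $c$-monochromatic $v$--$e_c^n$ path inside it. But the proposed remedy --- ``broaden the search'' using reservoir vertices chosen ``via an infinite pigeonhole on their colour-types'' --- is a statement of intent, not an argument. Nothing guarantees that a suitable attaching path exists after this broadening (the same obstruction can in principle recur on the enlarged vertex set), and even granting its existence you would still have to verify that the limit objects $\bigcup_n P_i^n$ are genuine paths (i.e.\ that no vertex is shuffled between paths or repositioned infinitely often once donor paths start being cannibalised at every bad stage) and that every vertex, including the auxiliary ones you pull from the reservoir, is eventually covered permanently. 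None of this is addressed, and it is precisely where the difficulty of the theorem lives.

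The known arguments (including the one this paper uses for its generalisation, which contains Rado's theorem as the case $k=t=2$) avoid on-the-fly endpoint repair altogether by doing all the Ramsey-type work \emph{before} building any paths. One first constructs a nested chain $\mathbb{N}\supseteq V_1\supseteq V_2\supseteq\cdots$ of infinite sets together with a colour $\chi(v)$ for each vertex $v$ such that every edge from $v$ into $V_v$ has colour $\chi(v)$; this is an iterated infinite pigeonhole and uses only that $r$ is finite. One then partitions the vertices into the classes $A_c=\chi^{-1}(c)$ and threads each class into a single path of colour $c$ greedily: always take the least uncovered vertex $a$ of $A_c$ and join it to the previous endpoint through a fresh connector vertex chosen deep enough in the chain, so that both connecting edges are guaranteed to have colour $c$ by the precomputed structure. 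The extension step is then trivial and the paths only ever grow, which is what makes the limit argument immediate. If you want to salvage your approach, this ``precompute a target set and a colour for every vertex first'' idea is the missing ingredient; without something equivalent to it, the bad case for $r\geq 3$ remains unresolved.
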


 Rado's theorem is best possible, as the construction for finite graphs in  \cite{EGP91} can be extended to infinite graphs. 

In this note we consider extensions of this result to hypergraphs.
A $k$-uniform hypergraph (or shortly, a $k$-graph) is a tuple $\cH = (V,\cE)$ where $V$ and $\cE$ are sets with $\cE \subseteq \binom{V}{k}$. The complete $k$-graph with vertex-set $V$, denoted by $\cK_{V}^{(k)}$, is the $k$-graph with edge-set $\cE = \binom{V}{k}$.

There are different notions of paths in hypergraphs, \emph{loose paths}, \emph{tight paths} and \emph{Berge-paths}; all of these coincide with the notion of paths when $k=2$. In this note, we will mainly consider Berge-paths.
Given integers $2 \leq t \leq k$ and $ \ell \geq 1$, a finite ($k$-uniform) {\itshape $t$-tight Berge-path} of length $\ell$ is a pair $(X,F)$ defined as follows. $X = \{v_1,\dots, v_{\ell+t-1}\} \subseteq V$ is a set of $\ell + t -1$ distinct vertices, $F = \{e_1, \dots, e_{\ell}\} \subseteq \cE$ is a set of $\ell$ distinct edges and $v_i,v_{i+1}, \dots, v_{i+t-1} \in e_i$ for all $i \in [\ell]$. For technical reasons we define a $t$-tight Berge-path of length $0$ as a pair $(\{v\}, \emptyset)$. A (one-way) infinite $t$-tight Berge-path is a pair $(X,F)$ where $X = \{v_i: i \in \N \} \subseteq V$, $F = \{e_i: i \in \N \} \subseteq \cE$, and $v_i,v_{i+1}, \dots v_{i+t-1} \in e_i$ for all $i \in \N$. For a $t$-tight Berge-path $P = (X,F)$, $X$ is called the {\itshape core} of $P$. A family $P_1 = (X_1, F_1), \dots, P_r = (X_r, F_r)$ of $t$-tight Berge-paths {\itshape core-partitions} $V$ if $X_1 \cup \dots \cup X_r$ is a partition of $V$. Given an edge-colouring $\varphi$ of a $k$-graph $\cH$, a $t$-tight Berge-path $P = (X,F)$ in $\cH$ is said to be {\itshape monochromatic} in colour $c$ if $\varphi(f) = c$ for all $f \in F$.

A $2$-tight Berge-path is simply called a Berge-path and a $k$-tight Berge-path is called a tight path. The $k$-uniform \emph{loose path} of length $\ell$ consists of $n=k(\ell -1)+1$ vertices $\{v_1, \ldots, v_n\}$ and the $\ell$ edges $\{v_{i(k-1)+1}, \ldots, v_{i(k-1)+k}\}$ for $i = 0, \ldots, \ell -1$. The \emph{infinite loose path} consists of the vertices $\{v_1, v_2, \ldots\}$ and edges $\{v_{i(k-1)+1}, \ldots, v_{i(k-1)+k}\}$ for all $ i =0,1,\ldots$.


Many extensions of path partition problems to hypergraphs have been studied, considering loose paths \cite{GS13, GS14, Sar14}, tight paths \cite{ESS+17, bustamante2019} and Berge-paths \cite{GS13}.
Most relevant for the topic of this note are the following two extensions of \cref{thm:rado}.

\begin{thm}[Gy\'arf\'as--S\'ark\"ozy \cite{GS13}]\label{thm:loose}
The vertices of every countably infinite $r$-edge-coloured complete $k$-graph can be partitioned into $r$ monochromatic loose paths (infinite or finite), one of each colour.
\end{thm}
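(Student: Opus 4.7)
Plan. I would extend the greedy/maximality construction underlying \cref{thm:rado} to the loose-path setting. Fix an enumeration $V = \{v_1, v_2, \ldots\}$ and build a sequence of tuples $(P_1^{(n)},\dots,P_r^{(n)})_{n \geq 0}$ of finite monochromatic loose paths, one of each colour, with pairwise disjoint cores, maintaining the invariant that $\{v_1,\dots,v_n\} \subseteq \bigcup_{c=1}^r V(P_c^{(n)})$ at every stage. If the construction can be made monotone in each coordinate, the limit $P_c = \bigcup_n P_c^{(n)}$ is a (finite or one-way infinite) monochromatic loose path of colour $c$, and the resulting $P_c$'s core-partition $V$.

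The only non-trivial step is the extension from stage $n$ to $n{+}1$ when $v_{n+1}$ is currently uncovered. We wish to extend some $P_c^{(n)}$ by a single colour-$c$ hyperedge of the form $\{u, v_{n+1}\} \cup X$, where $u$ is an endpoint of $P_c^{(n)}$ and $X$ is a $(k-2)$-subset of the (still infinite) uncovered set $U^{(n)} \setminus \{v_{n+1}\}$; this absorbs $v_{n+1}$ together with the $k-2$ internal vertices of the new edge. Crucially, the $(k-2)$-fold freedom in choosing $X$ supplies infinitely many candidate edges for each pair $(c,u)$, so by pigeonhole over the $r$ colours there is some colour $\chi(c,u) \in \{1,\dots,r\}$ realised by infinitely many of these candidates; whenever $\chi(c,u) = c$ for some endpoint $u$ of some $P_c^{(n)}$ we obtain the required extension.

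The main obstacle is that this fixed-point condition may fail for all $2r$ current endpoints, as $\chi$ maps a $2r$-set into an $r$-set with no a priori fixed point. To resolve this I would enlarge the candidate pool by allowing \emph{re-routing}: any partial path $P_c^{(n)}$ may be temporarily shortened by chopping off its tail, exposing a deeper vertex as a new endpoint and returning the chopped tail to $U^{(n)}$. With vastly more $(c,u)$-pairs available, a Rado-style maximality argument — working on the collection of tuples $(P_1,\dots,P_r)$ ordered by minimising the smallest index of an uncovered vertex — forces some extension (possibly after re-routing) to exist. The cleanest formulation dispenses with the staged construction altogether: one selects a tuple of monochromatic loose paths that is maximal in this ordering and argues, via the loose-path flexibility together with re-routing, that maximality forbids any vertex being left uncovered, directly yielding the required partition.
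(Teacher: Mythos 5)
First, note that the paper does not prove this statement at all: \cref{thm:loose} is quoted from \cite{GS13} as background, so there is no in-paper proof to compare against. Judged on its own terms, your proposal has a genuine gap precisely at the step you flag as the ``main obstacle''. The greedy framework and the pigeonhole observation (for a fixed endpoint $u$ and new vertex $v_{n+1}$, some colour is realised by infinitely many sets $\{u,v_{n+1}\}\cup X$) are fine, but the resolution you offer --- re-routing plus a ``Rado-style maximality argument'' --- is not a proof. Exposing more endpoints by chopping tails only produces more pairs $(c,u)$, each of which faces the same fixed-point problem; nothing forces $\chi(c,u)=c$ for any of them, and you give no argument that it must. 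Likewise, ``select a maximal tuple and derive a contradiction from an uncovered vertex'' requires exactly the extension step whose existence is in question, so the maximality reformulation does not buy anything. The deeper issue is that your scheme decides which path absorbs $v_{n+1}$ only at the moment of absorption, by looking at edges joining $v_{n+1}$ to current endpoints; bad early choices of connector vertices can then genuinely strand a vertex, and chopping tails cannot undo them.

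The missing ingredient is a \emph{per-vertex} colour assignment made in advance and a coherent reservoir of connectors, which is how both Rado's proof and the proof of \cref{BCF18:thm-1} in this paper work. Concretely: for each vertex $v$ one applies Ramsey's theorem (or a fixed non-principal ultrafilter) to the link colouring $f\mapsto\varphi(\{v\}\cup f)$ on $(k-1)$-sets to obtain nested infinite sets $\N\supseteq B_1\supseteq B_2\supseteq\cdots$ and a colour $c(v)$ such that every edge $\{v\}\cup f$ with $f\subseteq B_v$ has colour $c(v)$ --- this is exactly the role of the clique-chains in \cref{BCF18:lem-1}. Vertex $v$ is then committed to the path of colour $c(v)$, so no fixed-point problem arises, and the invariant that each path's current endpoint lies deep enough in the nested reservoir guarantees that the connecting edge through the endpoint, the new vertex and $k-2$ fresh reservoir vertices has the correct colour. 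I would encourage you to rework the extension step along these lines (mirroring \cref{BCF18:sec-ub}) rather than relying on re-routing.
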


\begin{thm}[Elekes--Soukup--Soukup--Szentmikl\'ossy \cite{ESS+17}]\label{thm:tight}
The vertices of every countably infinite $r$-edge-coloured complete $k$-graph can be partitioned into $r$ monochromatic tight paths (infinite or finite), one of each colour.
\end{thm}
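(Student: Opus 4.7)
The plan is to adapt the greedy construction in Rado's proof of \cref{thm:rado}. Enumerate $V = \{v_n : n \in \N\}$ and build the $r$ tight paths $P_1, \ldots, P_r$ (one per colour) as an increasing union of finite stages $P_c^{(0)} \subseteq P_c^{(1)} \subseteq \cdots$. At every stage the vertex sets of $P_1^{(n)}, \ldots, P_r^{(n)}$ are disjoint and their union contains $\{v_0, \ldots, v_{n-1}\}$, so the final paths are $P_c = \bigcup_n P_c^{(n)}$. Each $P_c^{(n)}$ is controlled by its \emph{tail} $T_c^{(n)}$, the last $k-1$ vertices: a new vertex $w$ can be appended to $P_c^{(n)}$ only when $\varphi(T_c^{(n)} \cup \{w\}) = c$.

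At stage $n+1$, if $v_n$ is already covered we do nothing; otherwise we try to append $v_n$ to some $P_c^{(n)}$. The easy case is that $\varphi(T_c^{(n)} \cup \{v_n\}) = c$ for some $c$, and we simply extend. In the hard case no colour works directly, and we must instead insert a short \emph{bridge}: a finite monochromatic tight path in some colour $c$, built from currently unused vertices, that first adjusts the tail $T_c^{(n)}$ and only then appends $v_n$. To find a bridge, apply the infinite pigeonhole to the set $W$ of unused vertices: for at least one colour $c$, the set $\{w \in W : \varphi(T_c^{(n)} \cup \{w\}) = c\}$ is infinite; pick any such $w$, extend $P_c$ by it, and repeat with the new tail until we reach a configuration $T'$ with $\varphi(T' \cup \{v_n\}) = c$. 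Dovetailing this with processing of the enumeration ensures fresh vertices consumed by a bridge are themselves eventually handled.

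The main obstacle is exactly the termination guarantee for the bridging step. Unlike in the graph case (where a single edge of the correct colour suffices) or the loose-path case (where consecutive edges share only one vertex and can absorb detours freely), each one-vertex extension of a tight path commits $k-1$ vertices of structure, so tails cannot be rearranged freely and a failed bridge attempt cannot be undone cheaply. One must therefore argue that from every configuration of $r$ tails, and for every $v_n$, some colour $c$ admits a terminating bridge; this is a genuine Ramsey-type structural statement about $r$-coloured complete $k$-graphs that does not follow from naive pigeonhole. I expect the actual proof to replace the ad hoc bridge construction by a more systematic device (for instance an auxiliary well-ordering on partial configurations, or a compactness/tree-search argument) that simultaneously constrains the fresh vertices consumed inside bridges and guarantees every $v_n$ is processed in finite time.
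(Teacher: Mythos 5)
Your proposal is a proof sketch with an acknowledged hole, not a proof: the entire difficulty of the theorem is concentrated in the ``termination guarantee for the bridging step,'' and you explicitly defer it (``I expect the actual proof to replace the ad hoc bridge construction by a more systematic device''). The naive pigeonhole step really does fail: knowing that infinitely many $w$ satisfy $\varphi(T_c \cup \{w\}) = c$ lets you extend $P_c$ once, but the extension replaces the tail by $T_c' = (T_c \setminus \{\text{first vertex}\}) \cup \{w\}$, and nothing controls the colours of edges through $T_c'$. There is no a priori reason a sequence of such extensions ever reaches a tail $T'$ with $\varphi(T' \cup \{v_n\}) = c$, and a failed attempt permanently consumes vertices and commits structure. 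So as written the argument does not establish the theorem.

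The missing device is a preliminary iterated application of Ramsey's theorem, which is exactly what the paper's \cref{BCF18:sec-ub} uses (\cref{BCF18:thm-1} specialises to \cref{thm:tight} when $t=k$). One builds a nested chain of infinite sets $V(\cK_1) \supseteq V(\cK_2) \supseteq \cdots$ such that for each vertex $i$ all edges of the form $\{i\} \cup f$ with $f \in \binom{V(\cK_i)}{k-1}$ receive a single colour $\chi(i)$; this assigns every vertex a colour \emph{before} any path is built (\cref{BCF18:lem-1} with $t=k$, $s=r$). The paths are then grown greedily, but each new core vertex $a$ of colour $i$ is preceded by $k-2$ or $k-1$ fresh connector vertices drawn from $V(\cK_a)$, so every consecutive $k$-set contains some vertex $v$ with the other $k-1$ vertices inside $V(\cK_v)$ and $\chi(v)=i$, forcing the edge to have colour $i$. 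This is precisely the ``systematic device'' you anticipate but do not supply: it converts the unbounded, possibly non-terminating tail-steering problem into a one-shot extension that always succeeds. Without some such Ramsey-theoretic preprocessing (or an equivalent compactness argument), your construction has a genuine gap at its central step.
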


The latter result answers a question of Gy\'arf\'as and S\'ark\"ozy from \cite{GS13}.
Note that both \cref{thm:loose,thm:tight} reduce to \cref{thm:rado} when $k=2$. Our main result extends \cref{thm:rado} in a similar way to Berge-paths. It turns out that $\lceil r/(k-1) \rceil$ paths suffice.

\begin{thm}\label{BCF18:thm-simple}
	For all $s,k \in \N$ with $k \geq 2$ and every $r=s(k-1)$-edge-colouring of $\cK_{\N}^{(k)}$, the vertices can be core-partitioned into $s$ monochromatic Berge-paths of different colours.
\end{thm}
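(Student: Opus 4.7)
The plan is to reduce to Rado's theorem (\cref{thm:rado}) via an auxiliary $s$-edge-colouring of the complete graph on $\N$ built using the infinite Ramsey theorem, where the factor $k-1$ is absorbed by grouping the $r$ hypergraph colours into $s$ blocks of size $k-1$.

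For each pair $\{u, v\} \in \binom{\N}{2}$, I apply the infinite Ramsey theorem to the $r$-colouring of $\binom{\N \setminus \{u, v\}}{k-2}$ given by $T \mapsto \varphi(\{u, v\} \cup T)$. This yields an infinite subset $M(u, v) \subset \N \setminus \{u, v\}$ and a colour $c(u, v) \in [r]$ such that every $k$-edge of the form $\{u, v\} \cup T$ with $T \in \binom{M(u, v)}{k-2}$ has colour $c(u, v)$. We thereby obtain a graph-colouring $c\colon \binom{\N}{2} \to [r]$ whose key feature is that each pair has infinitely many $c(\{u,v\})$-coloured $k$-edge extensions; this is what will let us lift graph paths back to Berge-paths.

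Next, partition $[r]$ into $s$ groups $G_1, \ldots, G_s$ of size $k-1$ each, and define $\chi\colon \binom{\N}{2} \to [s]$ by $\chi(\{u, v\}) = j$ iff $c(\{u, v\}) \in G_j$. Applying \cref{thm:rado} to the $s$-edge-coloured complete graph on $\N$ produces a partition of $\N$ into $s$ monochromatic graph paths $Q_1, \ldots, Q_s$, one per $\chi$-colour. For each $j$, I would lift $Q_j$ to a Berge-path by selecting, for each consecutive pair $\{u_i, u_{i+1}\}$ of $Q_j$, a $(k-2)$-subset $T_i \in \binom{M(u_i, u_{i+1})}{k-2}$ chosen greedily so that the $k$-edges $e_i = \{u_i, u_{i+1}\} \cup T_i$ are pairwise distinct; this is possible since each $M(u, v)$ is infinite and at step $i$ only finitely many previous $e_j$'s need to be avoided.

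The main obstacle is monochromaticity of the lifted Berge-paths: consecutive pairs of $Q_j$ may have different specific colours $c(\{u_i, u_{i+1}\}) \in G_j$, giving a Berge-path whose $k$-edges draw on several colours from $G_j$ rather than a single one. Forcing a common colour in $G_j$ to work for the whole of $Q_j$ is the crux of the proof; here the factor $k - 1 = |G_j|$ is essential, providing exactly enough slack in each group for a further refinement---either by a secondary Ramsey argument along each $Q_j$ or by a more careful coordinated construction of $c(\cdot)$ up front---to pin down a single colour in $G_j$ that works for all consecutive pairs in $Q_j$. I expect this consolidation step to be the main technical difficulty.
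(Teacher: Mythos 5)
Your first two steps are sound: the pairwise Ramsey argument producing $c(u,v)$ and $M(u,v)$, and the lifting of a genuinely monochromatic graph path to a Berge-path by greedily choosing distinct $(k-2)$-sets $T_i \subset M(u_i,u_{i+1})$, both work. But the consolidation step you flag at the end is not a technical difficulty to be smoothed over --- it is the entire content of the theorem, and the mechanism you propose for it does not exist. A $\chi$-monochromatic path $Q_j$ only guarantees that consecutive pairs carry colours from the group $G_j$ of size $k-1$; these colours can alternate arbitrarily along $Q_j$, and a Berge-path requires a \emph{single} colour of $[r]$ on all of its hyperedges. The slack afforded by Berge-paths (the $k-2$ free vertices in each hyperedge) has already been spent in collapsing each pair to the single colour $c(u,v)$, so there is nothing left to trade against the $k-1$ colours inside $G_j$; applying Rado to the fine colouring $c$ instead would give $r=s(k-1)$ paths, not $s$. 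No secondary Ramsey argument along $Q_j$ helps either, since passing to a subsequence of $Q_j$ destroys the partition.

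The factor $k-1$ enters the paper's proof in a structurally different way from what you guess: the colours are partitioned into $k-1$ blocks of size $s$ (not $s$ blocks of size $k-1$), and the saving comes from the fact that a single $k$-edge has room for $k-1$ designated ``hub'' vertices plus one further vertex, so at most $k-1$ blocks can be ``killed'' before a contradiction appears (Lemma~\ref{BCF18:lem-1}, via maximally-monochromatic nested infinite cliques). This yields a colouring of the \emph{vertices} with $s$ colours, where a vertex $v$ of colour $i$ can be joined by a colour-$i$ hyperedge to every vertex of a suitable infinite set $V(\cK_v)$; the $s$ Berge-paths are then built greedily by always absorbing the least uncovered vertex of each colour class, routing through the nested sets, rather than by invoking Rado's theorem as a black box. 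If you want to rescue your outline, you would need to replace the pair-colouring $c$ by this kind of vertex-colouring argument; as written, the proof has a genuine gap at its central step.
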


Note that \cref{BCF18:thm-simple} reduces to \cref{thm:rado} when $k =2$ as well.  We shall actually prove the following more general result about $t$-tight Berge-paths.

\begin{thm}\label{BCF18:thm-1}
	For all $s,k,t \in \N$ with $k \geq t \geq 2$ and every $r = s(k-t+1)$-edge-colouring of $\cK_{\N}^{(k)}$, the vertices can be core-partitioned into $s$ monochromatic $t$-tight Berge-paths of different colours.
\end{thm}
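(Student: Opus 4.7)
The plan is to reduce the statement to \cref{thm:tight} via an auxiliary $s$-edge-colouring of the complete $t$-graph on $\N$. Partition the $r = s(k-t+1)$ colours into $s$ disjoint blocks $C_1, \dots, C_s$ of size $k-t+1$. Each $t$-set $S \in \binom{\N}{t}$ is contained in infinitely many $k$-edges of $\cK_\N^{(k)}$, so by pigeonhole over the $s$ blocks at least one block $C_{j(S)}$ hosts infinitely many of them; set $\psi(S) = j(S)$ and apply \cref{thm:tight} to $\psi \colon \binom{\N}{t} \to [s]$ to obtain a partition of $\N$ into $s$ monochromatic $t$-tight paths $Q_1, \dots, Q_s$, with $Q_j$ of $\psi$-colour $j$. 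By construction, every consecutive $t$-window of $Q_j$ admits infinitely many extensions to $k$-edges coloured within $C_j$.

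The next task is to realise each $Q_j$ as a $t$-tight Berge-path $P_j$ in $\cK_\N^{(k)}$ monochromatic in a single colour $c_j \in C_j$; distinctness of the $c_j$ across $j$ is then automatic since the blocks $C_j$ are disjoint. For a single window $S$ of $Q_j$, a second pigeonhole inside $C_j$ yields some $c(S) \in C_j$ supporting infinitely many $k$-edges on $S$, so greedily selecting unused edges of colour $c(S)$ is straightforward window-by-window.

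The main obstacle is arranging that a single colour $c_j \in C_j$ works for all windows of $Q_j$ simultaneously: the pigeonhole choice $c(S)$ may vary between consecutive windows, and a second application of \cref{thm:tight} to the refined colouring $S \mapsto c(S) \in [r]$ would yield $r$ paths, undoing the reduction. To overcome this I would interleave the reduction and the lifting by running a Rado-style greedy construction directly on $\cK_\N^{(k)}$: enumerate $\N$ as $v_1, v_2, \dots$, maintain $s$ partial monochromatic $t$-tight Berge-paths $P_1, \dots, P_s$ with running representative colours $c_j \in C_j$, and at each step append the next unprocessed vertex to some path whose current end admits an unused $k$-edge of colour $c_j$. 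A Ramsey-type preprocessing on $\binom{\N}{t}$, passing to an infinite subset on which the set $g(S) \subseteq [r]$ of colours occurring infinitely often on $k$-edges of $S$ is constant equal to some $G \subseteq [r]$, should stabilise the viable representatives in each $C_j \cap G$, and trivial single-vertex Berge-paths in spare colours can absorb any residual vertices while preserving the distinctness requirement. The verification that this greedy step remains feasible at every stage, possibly requiring the representative $c_j$ inside $C_j$ to be reassigned on the fly, is where the main technical work lies.
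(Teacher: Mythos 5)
There is a genuine gap, and it sits exactly where you locate the ``main technical work''. Your first reduction (pigeonhole the $r=s(k-t+1)$ colours into $s$ blocks of size $k-t+1$, colour each $t$-set by a block hosting infinitely many extensions, apply \cref{thm:tight}) is fine, and you correctly observe that the lift fails because the viable colour inside $C_j$ can change from window to window. But the proposed repair does not close this. First, a Ramsey preprocessing that passes to an infinite subset $M\subseteq\N$ on which $g(S)$ is constant is incompatible with a \emph{partition} result: $\N\setminus M$ is in general infinite, and you have no control over $t$-sets meeting it; it cannot be absorbed by finitely many single-vertex paths, and there are no ``spare colours'' anyway since all $s$ paths (of distinct colours) are already committed. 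Second, reassigning the representative $c_j\in C_j$ ``on the fly'' is not available: once $P_j$ contains edges of the old colour, changing $c_j$ destroys monochromaticity, so the greedy step has no argument for why some path can always absorb the next vertex.

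What is actually needed — and what the paper supplies — is a \emph{per-vertex, nested} Ramsey structure rather than one global application: for every $i\in\N$ an infinite reservoir $V(\cK_i)$, with $V(\cK_{i+1})\subseteq V(\cK_i)$, such that $i$ together with any $(t-1)$-subset of $V(\cK_i)$ extends to a $k$-edge of a colour $\chi(i)$ fixed for that vertex (a $(t-1)$-clique-chain). Because every $i\in\N$ gets its own clique, no vertex is abandoned, and the nesting lets connector vertices for all paths be drawn compatibly. The real content is then \cref{BCF18:lem-1}: one can choose the chain so that $\chi$ uses at most $s$ colours. Its proof uses the \emph{transpose} of your partition — $k-t+1$ blocks of size $s$, not $s$ blocks of size $k-t+1$ — together with a ``maximally-monochromatic'' extremal choice at each step: if every block eventually dies, the $k-t+1$ failure vertices plus a $(t-1)$-set form a full $k$-edge, whose colour lies in some block and contradicts maximality. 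Nothing in your sketch plays the role of this lemma, and without it there is no bound of $s$ on the number of colours needed across all vertices; so the argument as proposed does not go through.
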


Note that the case $k=2$ reduces to \cref{BCF18:thm-simple}, and the case $k=t$ reduces to \cref{thm:tight}. The following theorem shows that \cref{BCF18:thm-1} is best possible.

\begin{thm}\label{BCF18:thm-2}
	For all $s,k,t \in \N$ with $k \geq t \geq 2$, there is an edge-colouring of $\cK_{\N}^{(k)}$ with $r = s(k-t+1) + 1$ colours in which the vertices cannot be covered by the cores of $s$ monochromatic $t$-tight Berge-paths.
\end{thm}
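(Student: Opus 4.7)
I plan to exhibit an explicit $r$-edge-colouring of $\cK_\N^{(k)}$ with $r = s(k-t+1)+1$ colours and verify directly that no $s$ monochromatic $t$-tight Berge-paths can core-cover $\N$.

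Partition $\N$ into $r$ countably infinite classes $V_1, \ldots, V_r$. I will define the colouring $c \colon \binom{\N}{k} \to [r]$ so as to enforce a \emph{localisation} property: for each colour $c$, there is a subset $S_c \subseteq [r]$ of size $k-t+1$ such that every monochromatic $t$-tight Berge-path of colour $c$ has its core contained in $V_{S_c} := \bigcup_{i \in S_c} V_i$ with only finitely many exceptions. The natural candidate for such a rule assigns colours based on the sorted multiset of $V$-indices of each edge's vertices, for instance taking $c(e) = a_{k-t+1}$ where $a_1 \leq a_2 \leq \ldots \leq a_k$ are the sorted indices of the vertices of $e$, and letting $S_c$ be determined by the structure this imposes on the edge's distribution between ``low'' classes ($V_{\leq c}$) and ``high'' classes ($V_{\geq c}$).

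Once such a localisation is established the conclusion is immediate. Suppose for contradiction that $s$ monochromatic $t$-tight Berge-paths of colours $c_1, \ldots, c_s$ with cores $X_1, \ldots, X_s$ core-cover $\N$. Since the subsets $S_{c_j}$ each have size $k-t+1$, their union has size at most $s(k-t+1) = r - 1$; hence there exists an index $i^\ast \in [r] \setminus \bigcup_{j=1}^{s} S_{c_j}$. By localisation, every $X_j \cap V_{i^\ast}$ is finite, so altogether only finitely many vertices of $V_{i^\ast}$ are covered by the $s$ cores, contradicting $|V_{i^\ast}| = \aleph_0$.

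The main obstacle is establishing the localisation property itself. In the tight-path case $k=t$ it is essentially immediate, since every vertex of every edge lies in the core; but once $k > t$ each edge carries $k-t$ ``extra'' vertices which are not part of the core and which can be chosen freely, giving a path room to reach into classes outside $S_c$ even when its colour would suggest otherwise. The subtle point is to design the colouring rule (and the accompanying subsets $S_c$) so that this freedom can be exercised only finitely often along any given path. I expect the argument to proceed by analysing, for each candidate core vertex outside $V_{S_c}$, the pattern of $V$-indices that the colouring rule forces on the (at most) $t$ edges incident to it in the path, and using a pigeonhole-type consequence to cap the total number of such positions along the path to a finite quantity.
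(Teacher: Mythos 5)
Your reduction from the ``localisation'' property to the theorem is fine, but it is the trivial half of the argument: the entire content of the theorem sits in the localisation lemma, which you leave unproved. Worse, the lemma is not just missing but unattainable in the setting you fix, because you insist that all $r$ classes $V_1,\dots,V_r$ be countably infinite. Test the smallest case $k=t=2$, $s=1$, $r=2$ (ordinary paths, one path, two colours): your rule $c(e)=a_{k-t+1}=a_1$ gives colour $1$ to every edge meeting $V_1$, and then the single colour-$1$ path $v_1,u_1,v_2,u_2,\dots$ obtained by interleaving enumerations of $V_1$ and $V_2$ covers all of $\N$ --- so this colouring fails to witness the theorem at all, and its colour-$1$ paths are certainly not confined to one class up to a finite set. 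The obstruction is structural rather than an artefact of this particular rule: whenever several classes are infinite, a Berge-path can alternate among them and absorb all of them, so no colouring determined by the multiset of class-indices can confine each colour's paths to $k-t+1$ of $r$ infinite classes. The ingredient you are missing is a size hierarchy: all blocks but one must be \emph{finite}, with each block much larger than the union of all earlier ones, so that a path whose every edge is forced back into the earlier (small) blocks can only pick up a bounded number of vertices from the current block.

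For comparison, the paper's construction indexes the blocks not by single colours but by $s$-subsets of $[r]$: it partitions $\N$ into sets $B_I$, $I\in\binom{[r]}{s}$, ordered lexicographically, with $|B_I|\ge st\sum_{J\prec I}(|B_J|+1)$ and only the last block infinite. An edge $e$ is coloured by any colour outside $\bigcup_{i\le k-t+1}I(x_e^i)$, where $x_e^1,\dots,x_e^k$ are the vertices of $e$ sorted by block; this forbidden set has size at most $s(k-t+1)=r-1$, so a colour always exists. If $s$ monochromatic paths used a set $C$ of $s$ colours and covered $\N$, then every edge of colour in $C$ meeting $B_C$ must have at least $k-t+1$ vertices in blocks preceding $B_C$, hence every window of $t$ consecutive core vertices meeting $B_C$ contains a vertex from $\bigcup_{J\prec C}B_J$; a short count then gives $|X_i\cap B_C|\le t\sum_{J\prec C}|B_J|+t-1$, which is less than $|B_C|/s$, so the $s$ cores cannot cover $B_C$. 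Note that the conclusion there is quantitative (each core meets $B_C$ in a small fraction) rather than the exact localisation you aim for, and the indexing by $s$-sets of colours is what makes the argument survive $s\ge 2$; both points would need to enter any repaired version of your proof.
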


We will prove \cref{BCF18:thm-2} in \cref{sec:constr} and \cref{BCF18:thm-1} in Section~\ref{BCF18:sec-ub}. 

\section{Proof of \texorpdfstring{\cref{BCF18:thm-2}}{Theorem 1.6}}\label{sec:constr}

The construction described in the proof generalises the construction from \cite{EGP91}.

\begin{proof}[Proof of \cref{BCF18:thm-2}]
  We denote the lexicographical ordering of $\binom{[r]}{s}$ by $\prec$. First, we partition $\N$ into sets $\left \{B_I: I \in \binom{[r]}{s} \right \}$ so that all $B_I$'s but $B_{\{r-s+1, \dots, r\}}$ are finite and $|B_I| \geq st \cdot \sum_{J \prec I} (|B_J|+1)$ for every $I \in \binom{[r]}{s}$.\footnote{This growth rate of the $B_{|I|}$ can be improved by a more careful analysis.} For $x \in \N$, let $I(x)$ be the $s$-subset of $[r]$ for which $x \in B_{I(x)}$. We define an $r$-edge-colouring $\varphi$ of $\cK_{\N}^{(k)}$ as follows.
	
	For every $e \in E(\cK_{\N}^{(k)})$ we consider an order $x_e^1, \dots, x_e^k$ of $e$ satisfying $I(x_e^i) \preceq I(x_e^j)$ for all $1 \leq i < j \leq k$, and define $\varphi(e)$ as an arbitrary member of $[r] \setminus \bigcup_{i \leq k-t+1} I(x_e^i)$.  
	
	Assume for contradiction that there are monochromatic $t$-tight Berge-paths $P_1, \dots, P_s$ with cores $X_1, \dots, X_s$  so that $\bigcup_i X_i = \N$ and let $C \subset [r]$ be a set of size $s$ which contains all colours used by these $t$-tight Berge-paths.
	
Observe that for every edge $e$ with $e \cap B_C \neq \emptyset$ and $\varphi(e) \in C$ we have 
\begin{equation}\label{eq:constr}
    \left |e \cap \bigcup\nolimits_{J \prec C } B_J \right | \geq k-t+1.
\end{equation}
Indeed, if $\left | e \cap \bigcup_{J \prec C} B_J \right | < k-t+1$, then $C \preceq I(x_e^{k-t+1})$ and thus $\varphi(e) \notin C$. 

For $ i \in C$, let $F_i$ be the set of all $f \in \binom{X_i}{t}$ which consist of $t$ consecutive vertices of $X_i$ with at least one element in $B_C$. Let $f \in F_i$ and let $e \in E(P_i)$ be some edge with $f \subset e$. By \cref{eq:constr}, we have $\card{e \setminus \bigcup_{J \prec C} B_J} \leq t-1$ and therefore some vertex in $f$ must be in $\bigcup_{J \prec C} B_J$.
Since every $ v \in \N$ is contained in at most $t$ sets $f \in F_i$, it follows that 
\begin{equation}\label{eq:constr2}
  \card{F_i} \leq t \card{\bigcup\nolimits_{J \prec C} B_J}.
\end{equation}

Observe now that for all but at most $t-1$ vertices $ v \in X_i \cap B_C$, there is a unique $f \in F_i$ starting at $v$ and thus
\begin{equation}\label{eq:constr3}
  \card{X_i \cap B_C} \leq \card{F_i} + t - 1.
\end{equation}
Combining \cref{eq:constr2,eq:constr3}, we get
\[ |X_i \cap B_C| \leq t \cdot \sum\nolimits_{J \prec C} |B_J| +t-1 < |B_C|/r \] for every $i \in [r]$ and hence $|B_C| = \left | B_C \cap \left ( \bigcup_i X_i \right ) \right | < |B_C|$, a contradiction.
\end{proof}




\section{Proof of \texorpdfstring{\cref{BCF18:thm-1}}{Theorem 1.5}}\label{BCF18:sec-ub}

Our proof is based on ideas from \cite{ESS+17}. First, we need to introduce some notation.
An $r$-multi-colouring of a $k$-graph $G$ is a function $\chi: E(G) \to 2^{[r]}$. Given a set $F \subset E(G)$, we denote by $\chi(F) = \bigcap_{e \in F} \chi(e)$ the set of colours they have in common and say that $F$ is ($\chi$-)\emph{monochromatic} if $\chi(F)$ is non-empty.
For a given $r$-colouring $\varphi$ of $\cK := \cK_{\N}^{(k)}$ and $i,j \in \N$ with $j < k$, we define an $r$-multi-colouring $\varphi_{i,j}: \binom{\N \setminus \{i\} }{j} \to 2^{[r]}$ by
\[\varphi_{i,j}(f) = \{\varphi(e): e \in E(\cK) \text{ and } \{i\} \cup f \subseteq e \}.\]

Furthermore, we call $\{ \cK_i : i \in \N \}$ a \emph{$j$-clique-chain} w.r.t.\ an $r$-colouring $\varphi$ of $\cK$ if $\cK_1$ is a $\varphi_{1,j}$-monochromatic copy of $\cK_{\N}^{(j)}$ with $V(\cK_1) \subseteq \N$ and $\cK_{i}$ is a $\varphi_{i,j}$-monochromatic copy of $\cK_{\N}^{(j)}$ with $V(\cK_i) \subseteq V(\cK_{i-1})$ for every $ i \in \N$. 

Observe that, by Ramsey's theorem \cite{Ram30} for infinite hypergraphs, there exists a $j$-clique-chain for every $r$-colouring of $\cK$ and every $j \in [k-1]$.

For a $j$-clique-chain $\{\cK_i : i \in \N \}$ we define a vertex-multi-colouring $\chi: \N \to 2^{[r]}$ by  $\chi(i)= \bigcap_{e \in E(\cK_i)} \varphi_{i,j}(e)$ for every $i\in \N$. We call $\chi$ the \emph{clique-colouring induced by $\{\cK_i : i \in \N \}$}.

\begin{lemma}\label{BCF18:lem-1}
For all $s,k,t \in \N$ with $k \geq t \geq 2$ and every $r =s(k-t+1)$-colouring of $\cK_{\N}^{(k)}$, there is a $(t-1)$-clique-chain that induces a clique-colouring using at most $s$ colours.
\end{lemma}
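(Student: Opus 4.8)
The plan is to build the $(t-1)$-clique-chain by a single iterated Ramsey construction that simultaneously fixes a canonical colouring of $(k-t+1)$-sets, and then to read off the required colours from a short extremal set-theory argument. Throughout write $m = k-t+1$, so $r = sm$. The operative reading of ``uses at most $s$ colours'' is that the sets $\chi(i)$ admit a common transversal of size at most $s$, i.e.\ there is a set $C \subseteq [r]$ with $|C| \le s$ meeting every $\chi(i)$; these are the colours of the $s$ Berge-paths to be built. (That one cannot hope for the stronger $\big|\bigcup_i \chi(i)\big| \le s$ is already visible for $k=3$, $t=2$, $s=1$.)

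First I would fix the chain. Processing the vertices $1, 2, \dots$ in turn as apices, I build a decreasing sequence of infinite sets $V(\cK_1) \supseteq V(\cK_2) \supseteq \cdots$; at step $i$ I pass to an infinite $V(\cK_i) \subseteq V(\cK_{i-1})$ on which (a) the finite colouring $\varphi_{i,t-1}$ is constant, so that $\chi(i)$ equals this constant value, and (b) for each of the finitely many $m$-sets $S$ with $\max S = i$, the colour $\varphi(S \cup T)$ is the same for every $(t-1)$-set $T \subseteq V(\cK_i)$. Both are finitely many infinite-Ramsey conditions, so a suitable $V(\cK_i)$ exists. Condition (b) defines a canonical colouring $c \colon \binom{\N}{m} \to [r]$ by letting $c(S)$ be the common colour of $S \cup T$ for deep $T$; crucially every vertex of $\N$ is an apex, so no vertices are discarded.

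The point of this set-up is the inclusion $\chi(i) \supseteq C_i := \{\, c(S) : S \in \binom{\N}{m},\ i \in S \,\}$. Indeed, given an $m$-set $S \ni i$, pick a $(t-1)$-set $T \subseteq V(\cK_{\max S})$; then $S \cup T$ is a $k$-edge containing $\{i\} \cup T$ of colour $c(S)$, so $c(S) \in \varphi_{i,t-1}(T)$, and by constancy $\varphi_{i,t-1}(T) = \chi(i)$. Here $S \setminus \{i\}$ supplies the $m-1 = k-t$ free vertices of the edge and $T$ the deep $(t-1)$-set, and the free vertices may lie anywhere in $\N$, which is exactly what the definition of $\varphi_{i,t-1}$ permits. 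Thus it suffices to find $s$ colours meeting every $C_i$. Now $\{C_i : i \in \N\}$ is \emph{$m$-wise intersecting}: for any vertices $i_1, \dots, i_m$ the colour $c(\{i_1, \dots, i_m\})$ lies in each of $C_{i_1}, \dots, C_{i_m}$. The last step is then the clean extremal fact that an $m$-wise intersecting family $\cF$ on a ground set of size $sm$ has transversal number at most $s$: otherwise, partitioning $[sm]$ into $m$ blocks $B_1, \dots, B_m$ of size $s$, each $B_j$ (not being a transversal) misses some $A_j \in \cF$, whence $\bigcup_j ([sm]\setminus A_j) \supseteq \bigcup_j B_j = [sm]$, so $A_1, \dots, A_m$ have empty common intersection, a contradiction. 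Applying this to $\{C_i\}$ produces the $s$ colours.

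I expect the main obstacle to be the first step rather than the counting. One must run the Ramsey construction so that every vertex of $\N$ genuinely serves as an apex—so that the eventual paths can cover all of $\N$—while at the same time stabilising all the deep extensions needed to define $c$ and to force $\chi(i) \supseteq C_i$; keeping the bookkeeping of ``deep'' $(t-1)$-sets consistent across the nested $V(\cK_i)$ is the delicate part. Once $c$ is in place the combinatorics is immediate, and the bound above is exactly tight, matching the construction of \cref{BCF18:thm-2}.
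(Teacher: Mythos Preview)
Your proof is correct, and your reading of ``uses at most $s$ colours'' as the existence of a size-$s$ transversal of the $\chi(i)$ is exactly what the paper means (and exactly what the proof of \cref{BCF18:thm-1} uses).

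The two arguments share the same combinatorial pivot: any $m=k-t+1$ apex vertices together with a sufficiently deep $(t-1)$-set assemble into a $k$-edge, so its colour lies in each of the $m$ corresponding sets, and a partition of $[sm]$ into $m$ size-$s$ blocks then extracts the transversal. The organisation is different, however. The paper fixes the block partition $C_1,\dots,C_m$ of $[r]$ \emph{before} building the chain and threads it through the construction via a ``maximally-monochromatic'' invariant: at each step one passes to a subclique hitting as many of the still-surviving blocks as possible, and if every block eventually dies, the $m$ apices at the death steps together with a deep Ramsey clique contradict maximality. You instead front-load a stronger Ramsey step, stabilising the canonical $m$-set colouring $c$ along with $\varphi_{i,t-1}$, read off that the family $\{C_i\}$ is $m$-wise intersecting, and only then apply the block partition as a self-contained extremal lemma ($m$-wise intersecting on $sm$ points $\Rightarrow$ transversal number at most $s$). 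Your route is more modular---Ramsey and the transversal argument are cleanly decoupled, and the extremal lemma is a reusable statement---at the price of stabilising $\binom{i-1}{m-1}$ additional finite colourings at step $i$; the paper's route is more direct and lighter on Ramsey per step.
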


\begin{proof}
Let $\varphi$ be the given $r$-colouring of $\cK := \cK_{\N}^{(k)}$. Furthermore, let $C_1\cup\dots\cup C_{r}$ be a partition of the set of $r = s(k-t+1)$ colours into $r$ blocks of size $s$. We will show that there is a $(t-1)$-clique-chain and some $i\in [r]$ such that for the induced clique-colouring we have $\chi(v)\cap C_i\ne \emptyset$ for all $v\in \mathbb{N}$.

We call an infinite $(t-1)$-uniform clique $\cK'$ \emph{maximally-monochromatic} w.r.t.\ a multi-colouring $\psi$ of $\cK'$ and a set $C\subseteq[r]$ if there is no infinite clique $\cK'' \subset \cK'$ with $|\setbuilder{i}{\psi(\cK'')\cap C_i\ne \emptyset}\cap C| > |\setbuilder{i}{\psi(\cK')\cap C_i\ne \emptyset}\cap C|$. Note that a maximally-monochromatic clique is not necessarily monochromatic (since all its infinite monochromatic subcliques might have colours not in $C$). Further note that every infinite clique contains a maximally-monochromatic infinite clique (since $r$ is finite).
    
We build a $(t-1)$-clique-chain as follows.
Let $\cK_1$ be any $\varphi_{1,t-1}$-monochromatic, maximally-monochromatic $(t-1)$-uniform clique w.r.t. $\phi_{1,t-1}$ and $D=[r]$, and let $D_1:=\setbuilder{i}{\phi_{1,t-1}(\cK_1)\cap C_i\ne \emptyset}$. Now, for every $j \in \N$, let $\cK_{j+1}$ be a $\varphi_{j+1,t-1}$-monochromatic, maximally-monochromatic clique w.r.t.\ $\phi_{j+1,t-1}$ and $D_i$ with $V(\cK_{j+1}) \subset V(\cK_j)$ and let $D_{i+1} = \setbuilder{i}{\phi_{j+1,t-1}(\cK_{j+1}) \cap C_i\ne \emptyset}$. If there is some $i \in [r]$ such that $C_i\cap D_j\ne \emptyset$ for all $j\in \mathbb{N}$, then $\{\cK_1, \cK_2, \ldots\}$ is a $(t-1)$-clique-chain with the desired property. Hence we may assume that there is no such $i$.

Thus, there exist $j_1, \ldots, j_r$, such that $C_i \cap D_{j_i} = \emptyset$
but $C_i \cap D_{i_j -1} \not = \emptyset$ for every $i \in [r]$. Without loss of generality we may assume that $j_1 \leq \ldots \leq j_{r}$.
Let $X = V(\cK_{j_{r}})$ and note that $V(\cK_{j_i}) \supseteq X$ for every $i \in [r]$. Define $\Phi: \binom{X}{t-1} \to 2^{[r]}$ by
\[ \Phi(f) = \{ \phi(e): e \in E(\cK) \text{ and } \{j_1, \ldots, j_{r}\} \cup f \subset e\}.\]
Note that every $ f \in \binom{X}{t-1}$ receives at least one colour,
and that $\Phi(f) \subset \phi_{j_i,t-1}(f)$ for every $f \in \binom{X}{t-1}$ and every $i \in [r]$.
By Ramsey's theorem for hypergraphs there is a $\Phi$-monochromatic infinite clique $\cK'$ in $X$. Therefore, there is some $\ell\in [r]$ such that $\Phi(\cK')\cap C_{\ell}\ne \emptyset$ and consequently $\cK_{j_\ell}$ is not maximally monochromatic.
%
%
\end{proof}

We proceed now with the proof of Theorem~\ref{BCF18:thm-1}.

\begin{proof}[Proof of Theorem~\ref{BCF18:thm-1}]
	Let $\varphi$ be the given $r$-colouring of $\cK = \cK_{\N}^{(k)}$. By Lemma~\ref{BCF18:lem-1}, there is a $(t-1)$-clique-chain $\{ \cK_i : i \in \N \}$ that induces a clique-colouring $\chi$ using at most $s$ colours (without loss of generality these colours are $1, \ldots, s$).
For $i \in [s]$, let $A_i \subset \N$ be the set of vertices of colour $i$ according to $\chi$.
    

   
By repeating the following process 
we will simultaneously build $t$-tight monochromatic Berge-paths $P_1, \ldots, P_r$ with core-vertex sequences $\{b_{i,1}, b_{i,2}, \ldots\}$ for every $i \in [s]$.
Let $b_{i,1} := \min A_i$ for every $i \in [s]$. In every step, we will add to each path $t$ or $t-1$ vertices making sure that for every $i \in [s]$, the last new vertex, say $b_{i,n_i}$, is in $A_i$, and that the other new vertices are in $V(\cK_{b_{i,n_i}})$.
Right after choosing the vertex $b_{i,j}$, we will choose  a unique edge $e_{i,j} \in E(\cK)$ of colour $i$ which contains the $t$ consecutive vertices $b_{i,j-t+1}, \ldots, b_{i,j}$ for every $j \geq t$ and $i \in [s]$. Let $X =\{b_{1,1}, \ldots, b_{s,1}\}$ and let $Y = \emptyset$. We will use $X$ to keep track of already used vertices and $Y$ to keep track of already used edges.

For each $i \in [s]$ do the following.\footnote{To avoid unnecessary subscripts for `local variables', we treat $i$ as being fixed in the following.}
Suppose the current path $P_i$ ends in $b_{i,n} \in A_i$ for some $n \in \N$.
We will now extend $P_i$ by $t$ or $t-1$ vertices as follows. Let $a$ be the smallest vertex in $A_i \setminus X$ (if $A_i\setminus X$ is empty, the path $P_i$ is complete and we move to the next step).
Add $a$ to $X$ and do the following for every $j=1, \ldots,t-2$. Choose a vertex $b_{i,n+j} \in V(\cK_{a}) \setminus (\bigcup Y)$ and add it to $X$ (note that this is always possible since $V(\cK_{a})$ is infinite and $Y$ is finite).
Let $f_{i,n+j} = \{b_{i,n+j-t+1}, \ldots, b_{i,n+j}\}$ be the set of the $t$ consecutive vertices in the core of the Berge-path $P_i$ ending at $b_{i,n+j}$. Note that $f_{i,n+j} \setminus \{b_{i,n}\} \in E(\cK_{b_{i,n}})$ and thus $ i \in \phi_{b_{i,n},t-1}(f_{i,n+j} \setminus \{b_{i,n}\})$. Hence, by the definition of $\phi_{b_{i,n},t-1}$, there exist $e_{i,n+j} \in E(\cK)$ with $ f_{i,n+j} \subset e_{i,n+j}$ and $\phi(e_{i,n+j})=i$. Add $e_{i,n+j}$ to $Y$. Since $b_{i,n+j} \not \in \bigcup Y$, we have $e_{i,n+j} \not \in Y$ and we can therefore use $e_{i,n+j}$ as the edge for $f_{i,n+j}$ in our Berge-path.

Choosing the next vertex will be slightly more complicated (since $a$ might be in some edge in $Y$). Let $b \in V(\cK_{a}) \setminus \bigcup Y$ and let $f_1' = \{b_{i,n}, \ldots, b_{i,n+t-2},b\}$ and $f_2' = \{b_{i,n+1}, \ldots, b_{i,n+t-2}, b, a\}$, and note that $f_1' \setminus \{b_{i,n}\} \in E(\cK_{b_{i,n}})$ and $f_2' \setminus \{a\} \in E(\cK_{a})$.
As before, $ i \in \phi_{b_{i,n},t-1}(f_1' \setminus \{b_{i,n}\}) \cap \phi_{a,t-1}(f_2' \setminus \{a\})$ and thus there exist $e_1',e_2' \in E(\cK)$ with $ f_s' \subset e_s'$ and $\phi(e_s')=i$ for both $s = 1,2$.
If $e_1' \not = e_2'$, let $b_{i,n+t-1} := b$ and $b_{i,n+t} = a$ and let $e_{i,n+t-1} := e_1'$ and $e_{i,n+t} = e_2'$.
Add $b_{i,n+t-1}$ to $X$ and $e_{i,n+t-1},e_{i,n+t}$ to $Y$. Note that $e_{i,n+t-1}$ and $e_{i,n+t}$ can be chosen as the edges for the $t$ consecutive vertices of $P_i$ ending in $b_{i,n+t-1}$ and $b_{i,n+t}$.
If $e_1' = e_2'$, let $b_{i,n+t-1} = a$ and $e_{i,n+t-1} = e_1' = e_2'$, and add $e_{i,n+t-1}$ to $Y$. Note that the $t$ consecutive vertices of $P_i$ ending in $b_{i,n+t-1}$ are contained in $e_{i,n+t-1}$ and $e_{i,n+t-1} \not \in Y$. Hence, $e_{i,n+t-1}$ can be chosen as the edge for the $t$ consecutive vertices of $P_i$ ending in $b_{i,n+t-1}$.

By construction, $P_1, \ldots, P_s$ are monochromatic $t$-tight Berge-paths whose cores are disjoint. Furthermore, since at the beginning of every step the smallest uncovered vertex $a$ of $A_i$ is chosen, we have $ \bigcup_i V(P_i) = \N$. 
\end{proof}

\section{Further remarks and open problems}
\cref{thm:rado,thm:loose,thm:tight,BCF18:thm-simple} remain true when we consider cycles instead of paths, where an infinite cycle is a two-way infinite path.
It is not clear to us however if one can replace paths by cycles in \cref{BCF18:thm-1} when $2 < t < k$. Difficulties only arise when trying to close finite paths to cycles, hence we can replace paths by cycles if we allow finitely many vertices to be uncovered.

A natural question to ask is if similar results hold in the finite setting. 
Gy\'arf\'as, Lehel, S\'ark\"ozy and Schelp \cite{GLS+08} conjectured that every finite $(k-1)$-edge-coloured complete $k$-graph contains a monochromatic Hamiltonian Berge-cycle. Note that, in the infinite setting, this is a special case of \cref{BCF18:thm-simple}.
After partial results in \cite{GLS+08, GSS10a, GSS10b, MO17}, Omidi \cite{Omi14} announced a proof of this conjecture.

We believe that a generalisation of this to more colours, similar as in \cref{BCF18:thm-simple}, is true as well.

\begin{conj}\label{BCF18:conj1}
	For all $s,k \in \N$ with $k \geq 2$, there is some $c = c(s,k) \in \N$ such that the following is true for all $n \in \N$. In every $r = s(k-1)$-edge-colouring of $\cK_{n}^{(k)}$, there is a collection of at most $s$ monochromatic $t$-tight Berge-cycles whose cores are disjoint and cover all but $c$ vertices.
\end{conj}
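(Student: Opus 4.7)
The plan is to combine the hypergraph regularity method with the structural arguments from \cref{BCF18:sec-ub} and the absorbing method. First I would apply a hypergraph regularity lemma to the $r$-edge-coloured $\cK_n^{(k)}$ with regularity parameter $\e$, obtaining a vertex partition $V_1 \cup \dots \cup V_m$ of near-equal cluster size, such that almost all $k$-tuples of clusters are $\e$-regular in each colour (with $m$ bounded in terms of $\e, k, r$). Then on the complete $(k-1)$-graph on $[m]$ I would define a multi-colouring by assigning to each $(k-1)$-tuple $\sigma$ of clusters the set of colours $c$ for which a substantial fraction of $k$-edges extending $\sigma$ have colour $c$; this reduced structure will play the role of $\varphi_{i,t-1}$ from \cref{BCF18:sec-ub}.

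Second, I would prove a finite analogue of \cref{BCF18:lem-1} on the reduced structure. Since $m$ can be made arbitrarily large by taking $\e$ small, iterated applications of finite Ramsey's theorem for $(t-1)$-uniform hypergraphs will produce a sub-clique of clusters whose induced multi-colouring uses at most $s$ colours; the key point is that the conclusion of \cref{BCF18:lem-1} depends only on $r$ and the number of iterations, not on the infiniteness of the underlying set. I would then partition these clusters into $s$ \emph{communities} according to their dominant colours. Within each community, a blow-up of the greedy Berge-path construction from the proof of \cref{BCF18:thm-1} will yield a monochromatic Berge-path covering all but an $\e$-fraction of the vertices in the community; regularity provides many valid choices for each extension step, replacing the infinite-clique argument used in the infinite setting.

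The main obstacle will be closing each Berge-path into a Berge-cycle while losing only $O_{s,k}(1)$ vertices in total, which is precisely the difficulty the authors flag at the end of the paper. My plan for this is the absorbing method: before running the greedy extension in each community, I would reserve an \emph{absorbing Berge-path} of size $\Theta(\e n)$ with the property that any set of at most $\e n$ additional community vertices can be inserted into it while preserving its endpoints. After the extension terminates, the absorber will swallow the leftover vertices from the regularity step and then be used to close the path into a cycle. The hard part is proving that such absorbers exist inside each regular, dense monochromatic community---in particular, that the insertion operation, well understood for graph cycles and tight hypergraph cycles, can be carried out in the Berge setting, where the flexibility in choosing host edges should help but the interplay between core vertices and chosen edges makes the endpoint-matching delicate. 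The resulting constant $c(s,k)$ will then be inherited (likely as a tower function) from the regularity and Ramsey parameters, but will be independent of $n$.
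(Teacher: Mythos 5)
This statement is an open conjecture: the paper offers no proof of it (it only records that Gerbner, Methuku, Omidi and Vizer have made some unpublished progress), so your proposal has to stand entirely on its own, and as written it is a plan with at least one step that fails and one step you yourself leave open. The step that fails is the ``finite analogue of \cref{BCF18:lem-1}''. In the infinite proof, the clique-chain $\{\cK_i : i \in \N\}$ contains one nested monochromatic clique \emph{per vertex}, so the number of Ramsey iterations equals the number of vertices being coloured; infiniteness is essential because an infinite set still has infinite monochromatic subcliques after any finite number of passes. In your reduced setting with $m$ clusters you would need $m$ nested applications of finite Ramsey's theorem for $(t-1)$-uniform hypergraphs in $2^{r}$ colours, each of which shrinks the ground set by a tower-type function, so you cannot sustain $m$ iterations starting from $m$ clusters --- the required $m$ would have to exceed a tower of height $m$. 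Your claim that the conclusion ``depends only on $r$ and the number of iterations, not on the infiniteness of the underlying set'' is exactly where the argument breaks: the number of iterations \emph{is} the size of the underlying set. Any finite proof must replace the clique-chain by a genuinely different (boundedly iterated, or non-Ramsey) structure.

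Beyond that, the absorbing step is not a routine adaptation: you correctly flag that inserting leftover vertices into a Berge-path while controlling which \emph{host edges} have already been consumed is the delicate point, but you give no construction, and this is precisely the difficulty the authors identify when they note they cannot even close their infinite paths into cycles for $2 < t < k$. Until both of these gaps are filled the proposal is a research programme rather than a proof. Two smaller remarks: for Berge structures one typically does not need the strong hypergraph regularity lemma (weak regularity, or even purely elementary counting as in the Gy\'arf\'as--S\'ark\"ozy line of work, usually suffices, and would avoid a tower-type $c(s,k)$); and note that the conjecture as printed contains a stray parameter $t$ --- it is meant for ordinary Berge-cycles ($t=2$), with \cref{BCF18:conj2} being the $t$-tight version, so your machinery should be calibrated to that case first.
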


For $k=2$, this reduces to a conjecture of Pokrovskiy \cite{Pok14}.
We further believe that this can be extended to $t$-tight Berge-cycles similarly to \cref{BCF18:thm-1}.

\begin{conj}\label{BCF18:conj2}
	For all $s,k,t \in \N$ with $k \geq t \geq 2$, there is some $c = c(s,k,t) \in \N$ such that the following is true for all $n \in \N$. In every $r = s(k-t+1)$-edge-colouring of $\cK_{n}^{(k)}$, there is a collection of at most $s$ monochromatic $t$-tight Berge-cycles whose cores are disjoint and cover all but $c$ vertices.
\end{conj}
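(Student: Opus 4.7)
The plan follows the regularity-plus-connected-matching blueprint used for Lehel-type conjectures in the finite graph setting, adapted to hypergraphs and combined with the Ramsey-type tool of \cref{BCF18:lem-1}. First I would apply a suitable $k$-uniform hypergraph regularity lemma to the $r = s(k-t+1)$-edge-coloured complete $k$-graph $\cK_n^{(k)}$, obtaining a reduced $r$-edge-coloured complete $k$-graph $\cR$ on a constant number $M = M(\e,k)$ of super-vertices, together with the usual counting and embedding guarantees for each colour class of super-edges.

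The main step is a finite analogue of \cref{BCF18:lem-1} for $\cR$: produce a labelling $\chi\colon V(\cR) \to 2^{[r]}$ under which all but a bounded number of super-vertices are assigned at least one colour from some common $s$-set $C \subseteq [r]$. This should be achievable by an iterated maximally-monochromatic $(t-1)$-clique argument mimicking the proof of \cref{BCF18:lem-1}, but terminating after a bounded number of steps since $\cR$ has constant size. Colour by colour, one then constructs monochromatic $t$-tight Berge-connected matchings in $\cR$ covering the super-vertices labelled with that colour, running the finite analogue of the greedy path-building procedure from the proof of \cref{BCF18:thm-1} and closing the resulting paths into cycles using the connectivity witnesses. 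A standard regularity lift then turns each monochromatic $t$-tight Berge-connected matching on super-vertices into a monochromatic $t$-tight Berge-cycle in $\cK_n^{(k)}$, and combined with a preliminary absorption step to deal with the $\e n$ regularity garbage, this yields a cover of all but $c = c(s,k,t)$ vertices.

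The hardest step, I expect, is the connected-matching-plus-lift: $t$-tight Berge-connectivity for $k \geq 3$ is considerably more subtle than graph connectivity and does not interact as cleanly with hypergraph regular partitions, so one must pick a notion of $t$-tight Berge-connectedness that is both usable to define ``good'' super-edges in $\cR$ and preserved under the regularity embedding. For $t = 2$ this appears manageable, but for $2 < t < k$ it may be necessary to develop further hypergraph tools, or to fall back on a stability argument: either the desired structure is found directly, or the colouring is close to the extremal construction of \cref{BCF18:thm-2} and the bounded exceptional set can be handled separately.
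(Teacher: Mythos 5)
This statement is left \emph{open} in the paper: it appears only as a conjecture, with a remark that Gerbner, Methuku, Omidi and Vizer have made unpublished progress towards it. There is therefore no proof of the paper's to compare yours against, and what you have written is a research programme rather than a proof. It contains at least one step that, as described, would fail. Your proposed ``finite analogue of \cref{BCF18:lem-1}'' cannot be obtained by ``mimicking the proof'' of that lemma: the infinite argument constructs a nested chain of cliques, one per vertex, and at \emph{every} step invokes the infinite Ramsey theorem to pass to a further \emph{infinite} monochromatic $(t-1)$-uniform subclique; the whole point is that infinitely many such extractions can be performed without ever running out of vertices. In a reduced structure on $M$ super-vertices, finite Ramsey for $(t-1)$-uniform hypergraphs only guarantees monochromatic cliques whose size is an iterated logarithm of $M$, so a nested extraction performed once per super-vertex collapses after a bounded number of steps, far short of labelling all but a constant number of super-vertices. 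Some genuinely different mechanism is needed here, and this is precisely one of the reasons the finite problem is hard and is left as a conjecture.

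Beyond that, each remaining step of your outline is a placeholder for a substantial piece of unwritten mathematics: a $k$-uniform regularity lemma whose reduced object supports a workable notion of monochromatic $t$-tight Berge-connected matching; an embedding lemma lifting such a matching to a $t$-tight Berge-cycle (for $2<t<k$ the right notion of connectivity and its compatibility with hypergraph regular partitions is, as you yourself concede, unclear); and an absorption argument reducing the $\e n$ regularity leftover to a constant $c(s,k,t)$ independent of $n$. None of these is supplied, and your closing sentence acknowledges that new tools or a stability analysis may be required. The proposal identifies a plausible line of attack in the spirit of the finite graph case, but it does not constitute a proof of the conjecture.
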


A simple modification of the construction in \cref{sec:constr} shows that these conjectures are best possible (if true) apart from the finite leftover.

Recently we learned that Gerbner, Methuku, Omidi and Vizer \cite[unpublished]{gerbner2019} made some progress towards these conjectures.


\section*{Acknowledgments}
The authors would like to thank Peter Allen and Jan van den Heuvel for their helpful suggestions on the manuscript. The first author was supported by CONICYT Doctoral Fellowship 21141116. Part of this research was done while the second and third author visited Universiad de Chile with the support of the Santander Travel Research Fund.

\bibliographystyle{amsplain}
\bibliography{bib.bib}

\end{document}